\newtheorem{theorem}{Theorem}
\newtheorem{corollary}{Corollary}
\newtheorem{definition}{Definition}
\newtheorem{remark}{Remark}
\newtheorem{proposition}{Proposition}
\newenvironment{proof}{{\it Proof:\/}}{\hfill $\Box$\\ }
\newtheorem{heuristics}{Heuristics}
\newcommand{\Z}{\mbox{\bf Z}}
\newcommand{\F}{\mbox{\bf  F}}
\renewcommand{\P}{\mathcal{P}}
\newcommand{\Proj}{\mathbf{P}}
\renewcommand{\L}{\mathcal{L}}
\renewcommand{\H}{\mathcal{H}}
\renewcommand{\v}{\mathbf{v}}
\renewcommand{\r}{\mathbf{r}}
\newcommand{\ceil}[1]{\lceil #1 \rceil}
\begin{document}

\title{Traps to the BGJT-Algorithm for Discrete Logarithms}

\author{Qi Cheng\\
School of Computer Science\\
University of Oklahoma\\
Norman, OK\\
\texttt{qcheng@cs.ou.edu}\\
\and
Daqing Wan\\
Department of Mathematics\\
University of California\\
Irvine, CA\\
\texttt{dwan@math.uci.edu}\\
\and Jincheng Zhuang\\
University of Oklahoma\\
Norman, OK\\
\texttt{jzhuang@ou.edu}\\
}
\date{}
\maketitle

\begin{abstract}
In the recent breakthrough paper by Barbulescu, 
Gaudry, Joux and Thom{\'e}, a quasi-polynomial time 
algorithm (QPA) is proposed for the discrete logarithm problem over finite fields
of small characteristic. The time complexity analysis of the algorithm is 
based on several heuristics presented in their  paper. 
We show that some of the heuristics
are problematic  in their original forms, 
in particular, when the field is not a Kummer extension. 
We believe that the basic idea behind the new approach should still work,
and propose a fix to the algorithm in non-Kummer cases,
without altering the quasi-polynomial time complexity. 
The modified algorithm is also heuristic. 
Further study is required in order
to fully understand the effectiveness of the new approach.
\end{abstract}

\section{Introduction}

Many cryptography protocols rely on hard computational 
number theoretical problems for security. 
The discrete logarithm problem over finite fields is one of the most important 
candidates, besides the integer factorization problem.
The hardness of discrete logarithms underpins 
the security of the widely adopted 
Diffie-Hellman key exchange protocol \cite{DiffieHe76} and 
ElGamal's cryptosystem \cite{ElGamal85}.

The  state-of-the-art general-purpose methods for 
solving the discrete logarithm problem in finite fields 
are the number field sieve and 
the function field sieve, which originated from 
the index-calculus algorithm. All the algorithms run
in subexponential time. Let 
\[L_{N}(\alpha)=\exp(O((\log N)^\alpha (\log\log N)^{1-\alpha})).\]
For a finite field $\F_q$,
successful efforts have been made to reduce the heuristic 
complexity of these algorithms 
from $L_{q} (1/2)$ to $L_{q} (1/3)$.
See \cite{Pollard78, Adleman79, Merkle79, Coppersmith84, Gordon93, Adleman94a, JouxLe06a, JouxLe06}.

A sequence of breakthrough results  \cite{Joux13a, Joux13b, GGMZ13} 
recently on the discrete logarithm problem over finite fields culminated 
in a discovery of a quasi-polynomial algorithm 
for small characteristic fields \cite{BGJT13}.
For a finite field $ \F_{q^{2 k}} $ with $ k < q $,
their algorithm runs in heuristic time  $ q^{O(\log k)} $. 
This result, if correct, essentially removes the discrete logarithm
over small characteristic  fields from hard problems in cryptography.

\subsection{Where does the computation really happen? }

Most serious attacks on the discrete logarithm problem over finite fields 
are based on smoothness of integers or  polynomials.
A polynomial is $ m $-smooth if all its irreducible factors have  degrees 
$\leq m  $.
The probability that a random polynomial of degree $n$ ($ \geq m  $ ) 
over a finite field 
$\F_q$ is $m-$smooth is  about $ (n/m)^{-n/m} $ 
\cite{PGF98}.

Suppose that we need to compute discrete logarithm in the field 
$ \F_{q^{2k}} $ where $q > k>1$. 
A main technique in \cite{BGJT13}, which bases on smooth polynomials,
is to find a nice ring generator
$ \zeta $ of $ \F_{q^{2k}}=\F_{q^2}[\zeta]$ over $  \F_{q^2} $ 
satisfying \[ x^q = h_0(x)/h_1(x),  \] 
where $ h_1 $ and $ h_0 $ are polynomials of very small degree.
In many places of the computation, polynomial degrees can be dropped 
quickly by replacing $ x^q $ with $ h_0 (x)/h_1 (x) $, which
allows an effective attack based on smoothness. 

The main issue with this approach is that
the computation really takes place in the ring
$ \F_{q^2}[x]/(x^q h_1 (x) - h_0 (x)) $, where in the analysis of 
\cite{BGJT13},
the computation is assumed to be in $ \F_{q^2}[x]/ (f(x)) $, 
where $f(x)$ is the minimal polynomial of $ \zeta $ over $ \F_{q^2} $.   
Since $f(x)$ divides $   x^q h_1 (x) - h_0 (x)$,  there is a natural surjective ring homomorphism   
\[ \F_{q^2}[x]/(x^q h_1 (x) - h_0 (x)) \rightarrow  \F_{q^2}[x]/ (f(x)). \] 
But  the 
former ring, which is a direct sum of the latter field (if $f(x)$ is a simple factor of 
$x^q h_1 (x) - h_0 (x)$) and a few other  rings,
 is much larger in many cases. 
The computation thus
can be affected by the other rings, rendering
 several conjectures in \cite{Joux13b, BGJT13}  problematic.

\subsection{Our work}

Interestingly, for the Kummer extension of the form
$  \F_{q^2}[x]/ (x^{q-1} - a) $, everything is fine. This is because 
the difference between the ring $ \F_{q^2}[x]/ (x^q - ax) $ 
and the field is rather small.
The discrete logarithm of $x$, which is a  zero divisor in the former ring,
can be computed easily in the latter field, since it  belongs to a subgroup
of a small order ( dividing $(q-1) (q^2-1)$)  in the  field. This is consistent with 
all announced practical  implementations. 

However, in case of more difficult non-Kummer extensions,  we discover that there are multiple problems. 
First, if $x^q h_1(x)-h_0(x)$ has linear factors over $\F_{q^2}$,  the discrete logarithms of
these linear factors cannot be computed in polynomial time,
invalidating a basic assumption in \cite{BGJT13}. One can verify that
most of polynomials given in \cite[Table 1]{Joux13b} have linear factors.
Second, even at the stage of finding discrete logarithms of linear elements, 
we show that there are additional serious restrictions on the choice of $h_0$ and $h_1$. 
For example, if  $x^q h_1(x)-h_0(x)$ has another irreducible factor over $\F_{q^2}$ of degree $k_i$  satisfying 
$\gcd(k_i,k)>1$, we do not see how the algorithm can work. We propose to select $h_0$ and $h_1$ 
such that $x^qh_1(x)-h_0(x)$ has only one irreducible factor $f(x)$ over $\F_{q^2}$ of degree $k$, 
and all other irreducible factors over $\F_{q^2}$ have degrees bigger than one and relatively prime to $k$. Under these assumptions, 
we give an algorithm which will find the discrete logarithm of any linear 
element in polynomial time, under a heuristic assumption supported by our theoretical results 
and numerical data. 

For a non-linear element, a clever idea, the so-called
QPA-descent,  was proposed in
\cite{BGJT13} to reduce its degree, until its relation to linear factors 
can be found. 
While the above two problems about linear factors can be fixed under 
our newly improved heuristic 
assumptions, another serious problem
is  that there are  {\em traps} in the QPA-descent.
For these traps, the QPA-descent described in \cite{BGJT13} 
will not work at all.
They will also block the descent of other elements,
hence severely affecting the usefulness of the new algorithm.
We  propose a descent strategy that avoids the traps,
without altering the quasi-polynomial time complexity. 
The modified algorithm is also heuristic.
We have done a few numerical studies to confirm the 
heuristic. 

In summary, for  large non-Kummer
fields,  we believe that the problem can be significantly 
more subtle than previously thought and 
further study needs to be conducted in order 
to fully understand the effect of the new algorithm.



\section{Finding the discrete logarithm of the linear factors}

We first review the new algorithm in \cite{BGJT13}. 
Suppose that the discrete logarithm
is sought over the field $\F_{q^{2k}}$ with $k < q$.
For other small characteristic fields,
for example, $\F_{p^k} $ ( $ p < k $  ), 
one first embeds it into a slightly larger field:
\[ \F_{p^k} \rightarrow \F_{q^k} \rightarrow  \F_{q^{2k}} 
\]
where $q= p^{\ceil{\log_p k}}$. A quasi-polynomial time
algorithm for  $  \F_{q^{2k}} $ implies a quasi-polynomial
time algorithm for $\F_{p^k} $.
We  assume that
\[ \F_{q^{2k}} = \F_{q^2}[\zeta] \]
where $ \zeta^q = \frac{h_0 (\zeta) }{h_1 (\zeta)}  $.
Here $h_0   $ and $ h_1 $ are polynomials over $ \F_{q^2} $ 
relatively prime to each other, and
of a constant degree. 
In particular, $\deg(h_0) < q +\deg(h_1)$. 
To find such a nice ring generator $\zeta$, one searches over 
all the polynomials
$h_0(x)$ and $h_1(x)$ of a constant degree in $\F_{q^2}[x]$, until
 $h_1(x) x^q - h_0 (x)$ has an irreducible factor $ f(x) $ 
of degree $k$ with multiplicity one.  Let the factorization be
\begin{equation} \label{ringpolfac}
 x^q h_1 (x) - h_0 (x) = f(x) \prod_{i=1}^l  (f_i (x))^{a_i} 
\end{equation}
where the polynomials $f(x)$ and $f_i(x)$'s are irreducible and 
pair-wise prime.
Denote the degree of $f_i (x)$ by $k_i$.

\begin{remark}
In practice, it is enough to search 
only a  quadratic polynomial $ h_0 $ (not necessarily monic)
and a monic linear polynomial $ h_1 $ in $\F_{q^2}[x] $.  
However proving the existence of such polynomials for any 
constant degree such that  $x^q h_1 (x) - h_0 (x)$ has the 
desired factorization pattern
seems to be out of reach by current techniques.
\end{remark}

For simplicity we assume that $ h_1(x) $ is monic and linear.
Most of the known algorithms start by  computing the
discrete logarithms of elements in a special set called a factor base,
which usually contains  small integers, or low degree polynomials.
In the new approach \cite{Joux13b, BGJT13},
the factor base consists of  the linear polynomials 
$ \zeta + \alpha  $ for all $ \alpha \in \F_{q^2} $, and
an algorithm is designed to compute the discrete logarithms of 
all the elements in the factor base. It is conjectured that
this algorithm runs in polynomial time.
One starts the algorithm with the identity:
\[ \prod_{\alpha \in \F_q } (x-\alpha) = x^q - x.  \]
Then apply the Mobius transformation
\[ x \mapsto \frac{a x + b}{cx +d} \]
where the matrix $ m = \begin{pmatrix} a & b \\ c & d
\end{pmatrix} \in \F_{q^2}^{2 \times 2}$ is nonsingular.
We have 
\[ \prod_{\alpha \in \F_q } (\frac{a x + b}{cx +d} -\alpha) 
= (\frac{a x + b}{cx +d})^q - \frac{a x + b}{cx +d}  \]
Clearing the denominator:
\begin{eqnarray*}
&&(cx +d) \prod_{\alpha \in \F_q } ((a x + b) -\alpha (cx +d)) \\
&=& (a x + b)^q (cx +d) - (a x + b) (cx +d)^q \\
&=& (a^q x^q + b^q) (cx+d) - (ax+b)(c^q x^q + d^q).  
\end{eqnarray*}
Multiplying both sides by $h_1(x)$ and replacing $x^qh_1(x)$ by $h_0(x)$, we obtain 
\begin{eqnarray*} 
&& h_1 (x) (cx +d) \prod_{\alpha \in \F_q } ( (a x + b) - \alpha (cx +d))\\
&=&  (a^q {h_0(x)} + b^q h_1 (x) ) (cx+d)
- (ax+b)(c^q {h_0(x)} + d^q h_1(x) )  \\
&& \pmod { x^q h_1(x) - h_0(x)}.
\end{eqnarray*}
If the right-hand side  can be 
factored into a product of linear factors over $ \F_{q^2} $,
we obtain a relation of the form 
\begin{equation}\label{mrelation}
\lambda^{e_0} \prod_{i=1}^{q^2} (x + \alpha_i)^{e_i} =\prod_{i=1}^{q^2} (x + \alpha_i)^{e'_i}\pmod { x^q h_1(x) - h_0(x)} ,
\end{equation}
where $\lambda$ is a multiplicative generator of $\F_{q^2}$,
 $ \alpha_1 = 0, \alpha_2, \alpha_3, \dotsc, \alpha_{q^2}  $
is a natural ordering of elements in $ \F_{q^2} $, and $ e_i $'s
and $ e'_i $'s are non-negative integers.

Following the same notations in \cite{BGJT13}, 
let $\P_q$ be a set of representatives of 
the left cosets of $PGL_2(\F_q)$ in $PGL_2(\F_{q^2})$. 
Note that the cardinality of $\P_q$ is $q^3+q$.
It was shown in \cite{BGJT13} that the matrices in
the same coset produce the same relation (\ref{mrelation}).

Suppose that  for some $1 \leq g \leq q^2 $, 
$ \zeta+\alpha_g $ is a known multiplicative
generator of $ \F_{q^2}[\zeta] = \F_{q^2}[x]/(f(x)) $.  
Since  (\ref{mrelation}) also holds modulo $f(x)$, 
taking the discrete logarithm
w.r.t. the base $ \zeta + \alpha_g $,
we obtain 
\begin{equation}\label{arelation}
e_0  \log_{\zeta+\alpha_g} \lambda + \sum_{1\leq i\leq q^2, i\not= g} 
(e_i-e'_i) \log_{\zeta+\alpha_g} (\zeta+\alpha_i) \equiv e'_g - e_g  
\pmod{q^{2k}-1}.  
\end{equation} 
The above equation gives us a linear   relation among the discrete 
logarithm of linear factors.
One hopes to collect enough relations such that the linear system formed 
by those relations is non-singular over $ \Z/(q^{2k}-1)\Z $.
It allows us to solve $\log_{\zeta+\alpha_g} (\zeta+\alpha_i)  $  
for all the $ \zeta +\alpha_i $ 
in the factor base.

However, 
if for some $ 1\leq z \leq q^2  $, 
\[ (x + \alpha_z) | x^q h_1(x) - h_0 (x),\]
the algorithm will unlikely compute
$ \log_{\zeta+\alpha_g} (\zeta+\alpha_z) $. 
It is because that $ x+\alpha_z $ is zero or nilpotent 
(w.l.o.g. let $ f_1 = x+\alpha_z $) in 
the $\F_{q^2}[x]/( (x+\alpha_z)^{a_1}) $ component of the ring
\[ \F_{q^2}[x]/(x^q h_1 (x) - h_0 (x)) 
=\F_{q^2}[x]/(f(x)) \oplus  \bigoplus_{i=1}^l \F_{q^2}[x]/(f_i(x)^{a_i}).\] 
Hence in   (\ref{mrelation}), if $ e_z > 0 $,
$ e'_z $ is positive as well.    Most likely we will have $ e_z = e'_z $,
so the coefficient for
$ \log_{\zeta +\alpha_g} (\zeta + \alpha_z) $ 
in (\ref{arelation}) will always be $ 0 $.

\begin{remark}
If $ e'_z > e_z \geq 1 $, it is possible to
compute $ \log_{\zeta +\alpha_g} (\zeta + \alpha_z) $.
However, this requires the low degree polynomial in the right
hand side of (\ref{mrelation}) to have the factor $ (x+\alpha_z)^2 $, which
is unlikely.
Our numerical data confirm that it
never happens when $ q $ is sufficiently large.
\end{remark}

To compute the discrete logarithm of $\zeta + \alpha_z$,
we have to use additional relations which hold for the 
field $\F_{q^2}[\zeta]$ but may not hold for the bigger ring 
$ \F_{q^2}[x]/(x^q h_1 (x) - h_0 (x)) $.
The equation \[ (\zeta+\alpha_z )^{q^{2k}-1} = 1  \] is
such an example. 
But this does not help in computing its discrete logarithm 
in the field $\F_{q^2}[\zeta]$, if it is the only relation
involving $ \zeta + \alpha_z $.

In general, it is hard to find useful additional relations 
for $ x+\alpha_z $, since for the algorithm to
 work, it is essential that  we replace $ x^q $ by $h_0(x)/h_1(x)$ 
(not replace $f(x)$ by zero) in the relation generating stage. 
Hence it is not clear that  the discrete logarithm of $\zeta + \alpha_z$ can be
computed in polynomial time, invalidating a conjecture in
\cite{BGJT13}. 
\begin{remark}
An exception is in the case of a Kummer extension,
where the zero divisor $ x $ in the ring has
a small order in the field.
\end{remark}

\section{The tale of two lattices}

To fix the above problem in a non-Kummer case, 
we can either change our factor base
to not include the linear factors of $x^q h_1 (x) - h_0 (x)$,
or we can search for $h_0$ and $h_1$ such that
$x^q h_1 (x) - h_0 (x)$  does not have linear factors.
In the following discussion, we will assume that 
$ x^q h_1 (x) - h_0 (x)  $ has no linear factor for simplicity. That is, 
$$k_i:= \deg(f_i) \geq 2 \ (1\leq i\leq l).$$
In this case, the linear factors $x +\alpha_i$'s are invertible in the ring 
$\F_{q^2} [x]/ (x^q h_1(x) - h_0 (x))$ and equation (\ref{mrelation}) reduces to 
\begin{equation}\label{mmrelation}
\lambda^{e_0} \prod_{i=1}^{q^2} (x + \alpha_i)^{e_i-e'_i} = 1 \pmod { x^q h_1(x) - h_0(x)}. 
\end{equation}
We define two fundamental lattices in $\Z^{q^2+1}$:
\begin{eqnarray*} 
\L_1 &=& \{ (e_0, e_1, \dotsc, e_{q^2}) | \lambda^{e_0} \prod_{i=1}^{q^2} (x + \alpha_i)^{e_i} =1
\pmod{ f (x)} \}, \\
\L_2 &=& \{ (e_0, e_1, \dotsc, e_{q^2}) | \lambda^{e_0} \prod_{i=1}^{q^2} 
(x + \alpha_1)^{e_i} =1
\pmod{ x^q h_1 (x) - h_0 (x) } \}.  
\end{eqnarray*}
It is easy to see that 
$\L_2 \subseteq \L_1$.  
Consider the group homomorphism 
\[ \psi_1: \Z^{q^2 + 1} \rightarrow  (\F_{q^2} [x]/ (f(x)))^*
\]
given by 
\[
(e_0, e_1, \dotsc, e_{q^2}) \mapsto \lambda^{e_0} \prod_{i=1}^{q^2} 
(x + \alpha_i)^{e_i}.
\]
The group homomorphism $\psi_2$ is defined in the same way, except that  
modulo $f(x)$ is replaced by modulo  $(x^q h_1(x) - h_0(x))$ respectively.

\begin{theorem}\label{ThSurjective} 
If $\deg(h_1)\leq 2$, then 
the maps $ \psi_1 $ and $\psi_2$ are  surjective.

\end{theorem}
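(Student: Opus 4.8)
The plan is to show that the image of $\psi_1$ (resp. $\psi_2$) is all of the unit group, by proving that the image is closed under the natural Galois/Frobenius action and contains enough to generate. I would start with $\psi_1$, where the target is the field $\F_{q^2}[x]/(f(x)) = \F_{q^{2k}}$. Since $\lambda$ generates $\F_{q^2}^*$, the image of $\psi_1$ contains all of $\F_{q^2}^*$; it also contains every linear polynomial $x+\alpha_i$ for $\alpha_i \in \F_{q^2}$, i.e.\ every monic linear polynomial in $\F_{q^2}[x]$ reduced mod $f$. So it suffices to show that the subgroup of $\F_{q^{2k}}^*$ generated by $\F_{q^2}^*$ together with $\{\,\zeta+\alpha : \alpha\in\F_{q^2}\,\}$ is the whole group. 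I would argue this via characters: suppose $\chi$ is a nontrivial character of $\F_{q^{2k}}^*$ that is trivial on all of $\F_{q^2}^*$ and on all $\zeta+\alpha$. Triviality on $\F_{q^2}^*$ means $\chi$ descends to a character of the norm-one quotient, equivalently has order dividing $(q^{2k}-1)/(q^2-1)$. Triviality on every $\zeta+\alpha$ is a strong restriction, and I would derive a contradiction by a counting/Weil-type argument: the multiplicative character sum $\sum_{\alpha\in\F_{q^2}}\chi(\zeta+\alpha)$ — or rather the statement that each term is $1$ — forces a polynomial identity that cannot hold for a proper character. More concretely, $x+\alpha \mapsto \chi(\zeta+\alpha)$ being identically $1$ on $\F_{q^2}$ is exactly the condition that the character $\chi$, pulled back along $\alpha\mapsto\zeta+\alpha$, is trivial, and since these linear polynomials together with constants generate the whole coordinate ring multiplicatively after inverting, $\chi$ must be trivial. (The clean way: the $x+\alpha_i$ and $\lambda$ generate $\F_{q^{2k}}^*$ because $f$ is irreducible of degree $k$ and any element of the field is a product of monic polynomials of degree $<k$ in $\zeta$, each of which factors into linears over $\overline{\F_{q^2}}$ — but we need linears over $\F_{q^2}$. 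This is where the real content lies.)

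For $\psi_2$, the target ring $R := \F_{q^2}[x]/(x^qh_1(x)-h_0(x))$ decomposes as $\F_{q^{2k}}\oplus\bigoplus_i \F_{q^2}[x]/(f_i^{a_i})$, so $R^* \cong \F_{q^{2k}}^*\times\prod_i (\F_{q^2}[x]/(f_i^{a_i}))^*$. Surjectivity of $\psi_2$ is equivalent to: the linear polynomials $x+\alpha_i$ together with $\lambda$ generate each factor, \emph{and} generate them independently — i.e.\ the combined map hits the full product. The first-coordinate statement is exactly surjectivity of $\psi_1$, already in hand. For the remaining coordinates, the hypothesis $\deg(h_1)\le 2$ enters: I expect one uses the defining relation $x^qh_1(x)\equiv h_0(x)$ to express high powers of $x$, and more importantly the "extra" relation $\prod_{\alpha\in\F_q}(x-\alpha)=x^q-x$ together with Möbius transforms gives many factorizations into linears that live in $R$, showing the linears are plentiful enough to surject onto every local factor simultaneously. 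The key mechanism: surjectivity onto a product of groups follows from surjectivity onto each factor plus a coprimality/independence argument, and here the independence should come from the fact that one can vary a relation to change one local component while holding another fixed — which is possible precisely because there are $\Theta(q^3)$ usable Möbius transforms and only $O(1)$ local factors.

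The main obstacle I anticipate is the core claim that $\{\lambda\}\cup\{x+\alpha:\alpha\in\F_{q^2}\}$ generates $\F_{q^{2k}}^*$ (the $\psi_1$ surjectivity), since over $\F_{q^2}$ a general element of $\F_{q^{2k}}$ is represented by a polynomial in $\zeta$ of degree up to $k-1$ that need not split into $\F_{q^2}$-linear factors. The way around this is the character-theoretic formulation: it is enough that no nontrivial multiplicative character of $\F_{q^{2k}}^*$ is trivial on $\F_{q^2}^*$ and on all $\zeta+\alpha$. One handles this with a Weil bound on the character sum $\sum_{\alpha\in\F_{q^2}}\chi(\zeta+\alpha)$: if such a $\chi$ existed the sum would equal $q^2$, but for a nontrivial $\chi$ of order $d\mid q^{2k}-1$ with $d\nmid q^2-1$ forced bounds give $|{\sum}|\le (d-1)\sqrt{q^2}$ or similar, contradicting $q^2$ once $q$ is large; the small-$q$ cases and the precise order bookkeeping are the fiddly part. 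Once $\psi_1$ is settled, $\psi_2$ should follow by the local-decomposition argument above, with $\deg(h_1)\le 2$ ensuring the relations obtained from the Möbius construction actually lie in $R$ and carry enough information about each local factor.
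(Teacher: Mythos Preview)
Your core instinct---use a character argument plus a Weil-type bound---is exactly right and matches the paper. But two things go astray.

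First, the Weil bound you quote, $|{\sum}|\le (d-1)\sqrt{q^2}$ with $d$ the order of $\chi$, is the wrong shape: $d$ can be as large as $q^{2k}-1$, which kills the contradiction. The correct bound depends on the \emph{degree of the modulus}, not on the order of the character. For a nontrivial Dirichlet character $\chi$ of $(\F_{q^2}[x]/(g))^*$ that is trivial on $\F_{q^2}^*$, one has $\bigl|\sum_{\alpha\in\F_{q^2}}\chi(x+\alpha)\bigr|\le (\deg g -1)\,q$ (this is the function-field Riemann hypothesis, as in Wan's 1997 paper). With $g=x^qh_1-h_0$ of degree $q+\deg(h_1)$, the hypothesis $\deg(h_1)\le 2$ gives a bound of $q^2$, contradicting the value $q^2+1$ forced by $\chi\equiv 1$ on all $x+\alpha$. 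This is precisely where $\deg(h_1)\le 2$ enters, not in any manipulation of the relation $x^qh_1\equiv h_0$.

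Second, your order of attack is backwards, and this creates a real gap. You try to prove $\psi_1$ first and then bootstrap to $\psi_2$ by decomposing $R^*$ into local factors and arguing surjectivity onto each plus ``independence''. But surjectivity onto each factor does not imply surjectivity onto the product, and your independence claim (enough M\"obius transforms vs.\ $O(1)$ factors) is heuristic, not a proof. The paper avoids this entirely: it runs the character argument directly for $\psi_2$, using the Weil bound for characters on the full ring $(\F_{q^2}[x]/(x^qh_1-h_0))^*$. Surjectivity of $\psi_2$ then gives surjectivity of $\psi_1$ for free, since $\psi_1$ is $\psi_2$ composed with the quotient map to the field. So the fix to your independence gap is simply to apply the character/Weil argument once, at the level of the ring, rather than factor by factor.
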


\begin{proof} It is enough to prove that $\psi_2$ is surjective. If not, the image $H$ of $\psi_2$ 
would be a proper subgroup of $(\F_{q^2} [x]/ (x^q h_1(x) - h_0 (x)))^*$. We can then choose 
a non-trivial character $\chi$ of $(\F_{q^2} [x]/ (x^q h_1(x) - h_0 (x)))^*$ which is trivial on the 
subgroup $H$. Since $\chi$ is trivial on $H$ which contains $\F_{q^2}^*$, we can use the Weil bound as given in  Theorem 2.1 in \cite{Wan97}
and deduce that 
$$1+q^2 =|1 +\sum_{\alpha \in \F_{q^2}}\chi(x +\alpha)| \leq (q+\deg(h_1)-2)\sqrt{q^2} \leq q^2.$$
This is a contradiction. It follows that $\psi_2$ must be surjective. 
\end{proof}

Note that in the application of computing discrete logarithms,
it is important that $ \psi_1 $ is surjective. 
As a corollary, we deduce 

\begin{corollary}  If $\deg(h_1)\leq 2$, then 
\begin{itemize}
\item the group $\Z^{q^2 +1}/ \L_1   $ is isomorphic to  the cyclic group $\Z/(q^{2k}-1)\Z$. 
\item the group $\Z^{q^2 +1}/ \L_2   $ is  isomorphic to
\[ \Z/(q^{2k}-1)\Z \oplus \bigoplus_{i=1}^l \Z/(q^{2k_i} - 1)\Z  \bigoplus (\text{a  finite  $p$-group}). \]
\end{itemize}
\end{corollary}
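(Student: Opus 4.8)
The plan is to deduce both isomorphisms from Theorem~\ref{ThSurjective} together with the Chinese Remainder decomposition of the ambient ring. First I would handle $\L_1$: by definition $\Z^{q^2+1}/\L_1$ is exactly the image of $\psi_1$ by the first isomorphism theorem, and Theorem~\ref{ThSurjective} says $\psi_1$ is onto $(\F_{q^2}[x]/(f(x)))^*$. Since $f$ is irreducible of degree $k$ over $\F_{q^2}$, this quotient ring is the field $\F_{q^{2k}}$, whose multiplicative group is cyclic of order $q^{2k}-1$. Hence $\Z^{q^2+1}/\L_1 \cong \Z/(q^{2k}-1)\Z$.

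For $\L_2$, I would start from the CRT decomposition already displayed in the excerpt,
\[
\F_{q^2}[x]/(x^q h_1(x) - h_0(x)) \;=\; \F_{q^2}[x]/(f(x)) \;\oplus\; \bigoplus_{i=1}^{l} \F_{q^2}[x]/(f_i(x)^{a_i}),
\]
which gives a corresponding product decomposition of unit groups. Theorem~\ref{ThSurjective} says $\psi_2$ surjects onto this whole unit group, so $\Z^{q^2+1}/\L_2$ is isomorphic to it. The field factor contributes $\Z/(q^{2k}-1)\Z$ as before. For each $i$, the unit group of $\F_{q^2}[x]/(f_i(x)^{a_i})$ sits in a short exact sequence with cyclic quotient $(\F_{q^{2k_i}})^* \cong \Z/(q^{2k_i}-1)\Z$ (the residue field, $f_i$ irreducible of degree $k_i$) and kernel the ``principal units'' $1 + (f_i)/(f_i^{a_i})$, which is a finite abelian $p$-group since $\F_{q^2}$ has characteristic $p$ and the kernel is a successive extension of copies of the additive group of the residue field. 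Because $\gcd(q^{2k_i}-1, p) = 1$, the short exact sequence splits, so each local unit group is $\Z/(q^{2k_i}-1)\Z \oplus (\text{a finite }p\text{-group})$; assembling the pieces and collecting all the $p$-parts into one finite $p$-group yields the claimed form.

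The steps in order: (1) invoke the first isomorphism theorem to identify $\Z^{q^2+1}/\L_j$ with $\mathrm{im}(\psi_j)$; (2) apply Theorem~\ref{ThSurjective} to replace the image by the full unit group; (3) for $\L_1$ recall $\F_{q^2}[x]/(f(x)) = \F_{q^{2k}}$ and cyclicity of finite-field multiplicative groups; (4) for $\L_2$ push the CRT decomposition through to unit groups; (5) analyze the local factor $(\F_{q^2}[x]/(f_i(x)^{a_i}))^*$ via the filtration by powers of $(f_i)$, identifying the top quotient and showing the principal-unit subgroup is a finite $p$-group; (6) split off the $p$-part using coprimality of $p$ with $q^{2k_i}-1$ and regroup.

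I expect the main obstacle to be step~(5): giving a clean justification that the principal units $1 + (f_i)/(f_i^{a_i})$ form a $p$-group and that the extension splits. This is a standard fact about truncated polynomial rings over a field of characteristic $p$ (each graded piece $1 + (f_i^j)/(f_i^{j+1})$ is isomorphic as a group to the additive group $\F_{q^2}[x]/(f_i)$, which is an elementary abelian $p$-group), but one must be slightly careful that we only need the abstract isomorphism type, not a canonical splitting, so it suffices to note that a finite abelian group is the direct sum of its $p$-part and its prime-to-$p$ part, the latter mapping isomorphically onto the cyclic residue-field group. Everything else is bookkeeping with the Chinese Remainder Theorem and the structure of finite abelian groups.
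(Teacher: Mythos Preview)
Your proposal is correct and follows exactly the route the paper has in mind: the corollary is stated in the paper without proof, as an immediate consequence of Theorem~\ref{ThSurjective}, and your argument (first isomorphism theorem, CRT on the unit group, then the standard filtration of the local factors $(\F_{q^2}[x]/(f_i^{a_i}))^*$ by principal units) is precisely the intended unpacking. There is nothing to add or to compare.
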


In particular, the group $\Z^{q^2 +1}/ \L_2   $ is not cyclic when 
$ l \geq 1  $. The relation generation stage only gives lattice vectors in $\L_2$, which 
is far from the $\L_1$ if $l\geq 1$. Thus, we need to add more relations to $\L_2$ in order 
to get close to $\L_1$. 

Since $\lambda^{q^2-1} = 1$, the vector $(q^2-1, 0,\cdots, 0)$ is automatically in $\L_2$.  
Let $\L_2^*$ be the lattice in $\Z^{q^2 + 1}$ 
generated by $\L_2$ and the following $q^2$ vectors 
$$(0, q^{2k}-1, 0, \cdots, 0), \cdots, (0,0, \cdots, 0, q^{2k}-1),$$
corresponding to the relations $(x+\alpha_i)^{q^{2k}-1} =1 $ 
modulo $f(x)$ for $\alpha_i \in \F_{q^2}$. 
It is clear that 
$$\L_2^* = \L_2 + (q^{2k}-1)\Z^{q^2+1}.$$
The next result
gives the group structure for the quotient $\Z^{q^2 +1}/ \L_2^* $.  

\begin{theorem}\label{ThIsomorphism}
For $deg(h_1) \leq 2$, there is a group isomorphism 
$$\Z^{q^2 + 1}/\L_2^*\cong \Z/(q^{2k}-1)\Z \oplus \bigoplus_{1\leq i\leq l} 
\Z/(q^{2\gcd(k,k_i)} - 1)\Z.   $$
\end{theorem}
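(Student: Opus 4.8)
The plan is to reduce the statement to the already-known structure of $\Z^{q^2+1}/\L_2$ and then apply a general ``reduction mod $N$'' observation. First I would record the elementary fact that for any sublattice $\L\subseteq\Z^n$ and any positive integer $N$, the image of $N\Z^n$ in the quotient $A:=\Z^n/\L$ is exactly $NA$, so that
\[ \Z^n/(\L+N\Z^n)\;\cong\;A/NA. \]
Applying this with $n=q^2+1$, $\L=\L_2$, $N=q^{2k}-1$, and using the identity $\L_2^*=\L_2+(q^{2k}-1)\Z^{q^2+1}$, reduces the theorem to computing $(\Z^{q^2+1}/\L_2)\big/(q^{2k}-1)(\Z^{q^2+1}/\L_2)$.

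Next I would invoke the preceding Corollary (equivalently, re-derive it from Theorem~\ref{ThSurjective} together with the Chinese Remainder Theorem applied to the factorization (\ref{ringpolfac})), which gives
\[ \Z^{q^2+1}/\L_2\;\cong\;\F_{q^{2k}}^*\oplus\bigoplus_{i=1}^l\bigl(\F_{q^2}[x]/f_i(x)^{a_i}\bigr)^*. \]
I would then split each local factor as $\bigl(\F_{q^2}[x]/f_i(x)^{a_i}\bigr)^*\cong\F_{q^{2k_i}}^*\oplus P_i$, where $P_i$ is the kernel of reduction modulo the maximal ideal $(f_i)$ — a finite abelian $p$-group of order $q^{2k_i(a_i-1)}$. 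Thus $\Z^{q^2+1}/\L_2$ is, up to isomorphism, the direct sum of the cyclic groups $\Z/(q^{2k}-1)\Z$ and $\Z/(q^{2k_i}-1)\Z$ together with the $p$-groups $P_i$, which is exactly the content of the Corollary.

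Finally I would compute $A/NA$ summand by summand, with $N=q^{2k}-1$. On the summand $\Z/(q^{2k}-1)\Z$ the result is $\Z/(q^{2k}-1)\Z$ unchanged, since $N$ annihilates it. On a summand $\Z/(q^{2k_i}-1)\Z$ the result is $\Z/\gcd(q^{2k}-1,\,q^{2k_i}-1)\Z=\Z/(q^{2\gcd(k,k_i)}-1)\Z$, using the standard identity $\gcd(q^a-1,q^b-1)=q^{\gcd(a,b)}-1$. On each $p$-group $P_i$, multiplication by $N=q^{2k}-1$ is an automorphism, because $p\mid q$ forces $\gcd(q^{2k}-1,p)=1$; hence $NP_i=P_i$ and $P_i/NP_i=0$. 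This last point is precisely where the spurious $p$-part coming from the non-reduced local rings is killed. Assembling the surviving summands yields the claimed isomorphism. I do not expect a genuine obstacle here: the only things requiring care are checking that $A\mapsto A/NA$ respects the direct-sum decomposition (it does, being an additive functor) and confirming the vanishing of the $P_i$, which rests entirely on the coprimality $\gcd(q^{2k}-1,p)=1$.
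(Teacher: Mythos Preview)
Your proof is correct and follows essentially the same route as the paper: identify $\Z^{q^2+1}/\L_2^*$ with $A/(q^{2k}-1)A$ via $\L_2^*=\L_2+(q^{2k}-1)\Z^{q^2+1}$, and then read off the answer from the known structure of $A=\Z^{q^2+1}/\L_2$. The only difference is that you spell out in detail the step the paper declares ``clear,'' namely the summand-by-summand computation of $A/(q^{2k}-1)A$ (the $\gcd$ identity and the vanishing of the $p$-part).
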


\begin{proof} Recall that 
\[ \Z^{q^2+1}/\L_2 \cong A \stackrel{\mathrm{def}}{=} \Z/(q^{2k}-1)\Z 
\oplus \bigoplus_{i=1}^l \Z/(q^{2k_i} - 1)\Z  \bigoplus (\text{a  
finite  $p$-group}). \]
It is clear that 
\[ A/(q^{2k}-1)A \cong \Z/(q^{2k}-1)\Z \oplus 
\bigoplus_{1\leq i\leq l}\Z/(q^{2\gcd(k_i,k)} - 1)\Z . \]
The kernel of the surjective composed homomorphism 
$$\Z^{q^2+1}\longrightarrow \Z^{q^2+1}/\L_2 \cong A \longrightarrow A/(q^{2k}-1)A$$
is precisely $\L_2 + (q^{2k}-1)\Z^{q^2+1}=\L_2^*$. The desired isomorphism follows. 

\end{proof}

If $\gcd(k_i,k)>1$ for some $i$, then $\L_2^*$ is still far from $\L_1$.  
We would like $\L_2^*$ to be as close to $\L_1$ as possible in a smooth sense.  
For us, the more interesting case is the following 
\begin{corollary} 
Let $deg(h_1) \leq 2$. If $\gcd(k_i, k)=1$ for all $1\leq i\leq l$, 
we have an isomorphism 
$$\Z^{q^2+1}/\L_2^* \cong \Z/(q^{2k}-1)\Z \oplus 
(\Z/(q^{2} - 1)\Z)^l . $$
\end{corollary}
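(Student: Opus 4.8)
The plan is to derive this directly from Theorem~\ref{ThIsomorphism} by specializing the hypothesis. Theorem~\ref{ThIsomorphism} gives, for $\deg(h_1)\le 2$, the isomorphism $\Z^{q^2+1}/\L_2^* \cong \Z/(q^{2k}-1)\Z \oplus \bigoplus_{1\le i\le l} \Z/(q^{2\gcd(k,k_i)}-1)\Z$. So the only thing to do is evaluate the exponents $\gcd(k,k_i)$ under the additional assumption $\gcd(k_i,k)=1$ for every $i$.

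First I would observe that $\gcd(k,k_i)=1$ immediately forces the $i$-th summand $\Z/(q^{2\gcd(k,k_i)}-1)\Z$ to be $\Z/(q^2-1)\Z$. This is the whole content of the step: substitute $1$ for each $\gcd(k,k_i)$ in the exponent. Running $i$ over the $l$ factors $f_i$ then collapses $\bigoplus_{1\le i\le l}\Z/(q^2-1)\Z$ into the notation $(\Z/(q^2-1)\Z)^l$, and the $k$-summand $\Z/(q^{2k}-1)\Z$ is untouched, yielding exactly the claimed $\Z^{q^2+1}/\L_2^* \cong \Z/(q^{2k}-1)\Z \oplus (\Z/(q^2-1)\Z)^l$.

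There is essentially no obstacle here; the corollary is a one-line specialization and the real work was already done in Theorem~\ref{ThIsomorphism} (and before that in the structure result for $\Z^{q^2+1}/\L_2$ and the surjectivity Theorem~\ref{ThSurjective}, which required the Weil bound and the constraint $\deg(h_1)\le 2$). The only point worth a sentence of care is that the hypothesis $\gcd(k_i,k)=1$ is exactly what is needed to apply Theorem~\ref{ThIsomorphism} with a uniform value of the $\gcd$; no separate argument about the $p$-group or about the index $l$ is needed since those have already been absorbed. Hence the proof is simply: ``Apply Theorem~\ref{ThIsomorphism} and note that $\gcd(k,k_i)=1$ gives $\Z/(q^{2\gcd(k,k_i)}-1)\Z = \Z/(q^2-1)\Z$ for each $i$.''
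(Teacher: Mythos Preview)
Your proposal is correct and matches the paper's approach: the corollary is stated without proof immediately after Theorem~\ref{ThIsomorphism}, as an obvious specialization obtained by setting each $\gcd(k,k_i)=1$.
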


This corollary shows that under the same assumption, the lattice $\L_2^*$ is a smooth approximation of $\L_1$ 
in the sense that the quotient $\L_1/\L_2^*$ is a direct sum of small order cyclic groups. 

The  algorithm to compute the discrete logarithms 
in the factor base essentially samples vectors from the lattice $\L_2$.
Let $\r_1, \r_2, \dotsc, $ be the vectors in $ \L_2$ 
obtained by the relation-finding algorithm, i.e., 
from the relations in (\ref{mmrelation}). 
Let $ \hat{\L_2} $ be the lattice generated by those vectors.
Let $ \hat{\L_1} $ be the lattice generated by $ \hat{\L_2}  $
and the following $q^2+1$ vectors: 
$$(q^2-1, 0, \cdots, 0), (0, q^{2k}-1, 0, \cdots, 0), \cdots, (0,0, \cdots, 0, q^{2k}-1). $$
Computing 
the Hermite (or Smith) Normal Form of $ \hat{\L_1} $ is equivalent to
solving the linear system $ \hat{\L_2} $ in the ring $ \Z/(q^{2k}-1)\Z $. 
It is in general difficult to find bases for 
the two lattices $\L_1$ and $\L_2$ directly.  
One can think that $ \hat{\L_1} $ and $ \hat{\L_2} $ 
are the approximations of $ \L_1 $ and $ \L_2 $ respectively. These approximations can be computed  
by the polynomial time algorithm.
 Obviously,
\[ \hat{\L_2} 
\begin{array}{c}
\subseteq \L_2 \subseteq\\
\subseteq \hat{\L_1} \subseteq
\end{array}
\L_2^* \subseteq \L_1.  \] 
These inclusions induce surjective group homomorphisms 
\[\Z^{q^2 +1}/ \hat{\L_2}
\begin{array}{c}
\rightarrow \Z^{q^2 +1}/ \L_2 \rightarrow \\
\rightarrow \Z^{q^2 +1}/ \hat{\L_1} \rightarrow 
\end{array}
\Z^{q^2+1}/\L_2^* \rightarrow \Z^{q^2 +1}/ \L_1.
\]
If $\Z^{q^2 + 1} /\hat{\L_2}$ is cyclic, then its quotient $\Z^{q^2 + 1} /\L_2$ 
will be cyclic.  
This is false if $l \geq 1$ as we have seen before.  
Similarly,  $ \Z^{q^2 +1}/ \hat{\L_1} $ is not cyclic as its quotient  $\Z^{q^2 + 1} /\L_2^*$ 
is not cyclic if $l \geq 1$. Hence the conjecture in \cite{HuangNa13} 
also needs modification. It seems reasonable to hope that $\hat{\L_1}$ is a good approximation 
to $\L_2^*$ in the sense that the quotient $\L_2^*/\hat{\L_1}$ is a direct sum of small order 
cyclic groups. In the interesting case when $ \gcd(k,k_i) =1 $ 
for all $ 1\leq i\leq l $, our numerical data suggest the following highly plausible 

\begin{heuristics}\label{HSmith}
Assume that $ x^q h_1(x) - h_0(x) $ does not
have linear factors, and $ \gcd(k,k_i) =1 $ 
for all $ 1\leq i\leq l $. 
Then in the Smith Normal Form of $ \hat{\L_1} $, the diagonal
elements are
\[ 1, 1, \cdots, 1, s_1, \cdots, s_t, q^{2k}-1,  \] 
where for $ 1\leq i\leq t $,  $ s_i > 1 $ and $s_i | q^2 -1$.
\end{heuristics}

Assuming the heuristics, 
$ \Z^{q^2 +1}/ \hat{\L_1} $  is not much bigger than 
$ \Z^{q^2 +1}/ \L_1 $, namely,
\[ \Z^{q^2+1}/\hat{\L_1} \cong \Z/s_1\Z \oplus \Z/s_2\Z \oplus \cdots 
\oplus \Z/s_t \Z \oplus \Z/(q^{2k}-1)\Z. \] 
We can  find a generator for each component,
as a product of linear polynomials from the computation 
of the Smith Normal Form. 
Suppose that for $ 1\leq i\leq t $,
the generator for the $ i $-th  component is
\[ \lambda^{e_{i0}} \prod_{ 1\leq j \leq q^2} (x+\alpha_j)^{e_{ij}}. \] 
Since $ s_i | q^2 -1 $, the above generator belongs to $ \F_{q^2} $
in $ \F_{q^2}[x]/(f(x))$. Assuming that it is $ \lambda^{e'_{i0}}$, 
we have 
\[ \lambda^{e_{i0} - e'_{i0}} \prod_{ 1\leq j \leq q^2} (x+\alpha_j)^{e_{ij}} 
 = 1 \pmod{ f(x) }.  \]  
There are $ t $ such relations. Adding them  to $ \hat{\L_1} $,
we will  finally arrive at the lattice $ \L_1 $.
It allows us to find a  generator for
$ (\F_{q^2}[x]/(f(x)))^* $, and to solve the discrete logarithms 
for the factor base, w.r.t. this generator.

\section{The trap to the QPA-descent}

Now we review the QPA-descent.
Suppose that we need to compute the discrete logarithm
of $ W(\zeta) \in \F_{q^{2k}}[\zeta] $, where $ W $  is
a polynomial over $ \F_{q^2} $ of degree $ w >1 $. 
The QPA-descent, firstly proposed in \cite{BGJT13}, 
is to represent $ W(\zeta) $ 
as a product of elements of smaller degree, e.g. $ \leq w/2 $,
in the field $ \F_{q^2} [x]/ (f(x))  $. To do this,
one again starts with the identity:
\[ \prod_{\alpha \in \F_q } (x-\alpha) = x^q - x.  \]
Then apply the  transformation
\[ x \mapsto \frac{a W(x) + b}{c W(x) +d} \]
where the matrix $ m = \begin{pmatrix} a & b \\ c & d
\end{pmatrix} \in \F_{q^2}^{2 \times 2}$ is nonsingular.
We have 
\[ \prod_{\alpha \in \F_q } (\frac{a W(x) + b}{c W(x) +d} -\alpha) 
= (\frac{a W(x) + b}{c W(x) +d})^q - \frac{a W(x) + b}{c W(x) +d}.  \]
Clearing the denominator:
\begin{eqnarray*}
&&(c W(x) +d) \prod_{\alpha \in \F_q } ((a W(x) + b) -\alpha (c W( x) +d)) \\
&=& (a W(x) + b)^q (c W(x) +d) - (a W(x) + b) (c W(x) +d)^q \\
&=& (a^q \tilde{W}(x^q) + b^q) (cW(x)+d) - 
(a W(x)+b)(c^q \tilde{W}(x^q) + d^q),  
\end{eqnarray*}
where $ \tilde{W} (x) $ is a polynomial obtained by raising the coefficients of 
$ W(x) $ to the $q$-th power.
Replacing $x^q $ with $h_0(x)/h_1(x)$, 
we obtain 
\begin{eqnarray*}
&&  (c W(x) +d) \prod_{\alpha \in \F_q } ( (a W( x) + b) - \alpha (c W(x) +d))\\
&=&  (a^q \tilde{W}(h_0(x)/h_1(x)) + b^q  ) (c W(x)+d) \\
&&- (a W(x) + b ) (\tilde{W}(h_0(x)/h_1(x)) + d^q h_1(x) ) \\
&& \pmod { x^q h_1(x) - h_0(x)}.
\end{eqnarray*}
It was shown in \cite{BGJT13} that  matrices in the 
same left coset of $PGL_2(\F_q)$
of $PGL_2(\F_{q^2})$ generate 
the same equations.
The denominator of the right-hand size  is a power of $ h_1(x) $.  
Denote the numerator of the right-hand side polynomial by $ N_{m,W}(x) $.
If the polynomial  $ N_{m,W}(x) $ is $ w/2 $-smooth, 
namely, it can be factored completely into a product of irreducible factors 
over $ \F_{q^2} $, all have degree $ w/2 $ or less,
we obtain a relation of the form 
\begin{equation}\label{Dmrelation}
\prod_{i=1}^{q^2} (W(x) + \alpha_i)^{e_i} 
 = \lambda^{e_0} \prod_{g(x)\in S } g(x)^{e'_g} \pmod { x^q h_1(x) - h_0(x)} ,
\end{equation}
where $ S\subseteq \F_{q^2}[x] $ is a set of monic polynomials of degrees
less than $ w/2 $ and with cardinality at most $ 3 w $. 
Denote the vector $ (e_1, e_2, \dotsc, e_{q^2}) $ by $ \v_m $.
Note that it is a binary vector, and it is independent of $ W(x) $.  
Collecting enough number of relations will allow
us to represent $ W(x) $ as a product of elements of smaller degrees.
This process is the QPA-descent. A heuristic, made in \cite{BGJT13},
is that repeating the process, one can
represent any element in $ \F_{q^2}[x]/(f(x)) $ 
as a product of linear factors. Combining it with
the fact that the discrete logarithm of the linear factors
are known, one solves the discrete logarithm for any element. 

However the descent will not work
if $ W(x) $ is a factor of $x^q h_1(x) - h_0 (x)  $. Recall that 
$ \alpha_1=0 $.

\begin{theorem}
If 
$ W(x) | x^q h_1(x) - h_0 (x)$,  $e_1 $ will always be $ 0$
in (\ref{Dmrelation}).
\end{theorem}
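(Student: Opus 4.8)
The plan is to push the identity (\ref{Dmrelation}), which a priori lives only in the full ring $R:=\F_{q^2}[x]/(x^q h_1(x)-h_0(x))$, down into the residue field at an irreducible factor of $W(x)$, where $W(x)$ itself becomes zero. Since $\alpha_1=0$, the factor $W(x)+\alpha_1$ on the left-hand side of (\ref{Dmrelation}) is just $W(x)$, and $\v_m=(e_1,\dots,e_{q^2})$ is a $0$--$1$ vector, so it suffices to rule out $e_1=1$. The point will be that the vanishing forced on the left cannot be matched on the right.

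First I would fix an irreducible factor $f_\ast(x)$ of $W(x)$. Because $W(x)\mid x^q h_1(x)-h_0(x)$ by hypothesis, $f_\ast(x)$ also divides $x^q h_1(x)-h_0(x)$, so $(x^q h_1-h_0)\subseteq (f_\ast)$ and the field $K:=\F_{q^2}[x]/(f_\ast(x))$ is a quotient of $R$; hence the relation (\ref{Dmrelation}) remains valid after reduction modulo $f_\ast(x)$. Now assume, for contradiction, that $e_1=1$. Modulo $f_\ast$ the image of $W(x)$ is $0$, so each $W(x)+\alpha_i$ reduces to the scalar $\alpha_i$; since $\alpha_1=0$, the $i=1$ factor on the left of (\ref{Dmrelation}) is $W(x)^{e_1}\equiv 0$, and therefore the entire left-hand side reduces to $0$ in $K$. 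Consequently $\lambda^{e_0}\prod_{g\in S}\bar g(x)^{e'_g}=0$ in the field $K$, which forces $\bar g(x)=0$ for some $g\in S$, that is, $f_\ast(x)\mid g(x)$.

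It remains to derive a contradiction from $f_\ast(x)\mid g(x)$. In the QPA-descent one may take $W(x)$ irreducible (an arbitrary $W$ is first factored and its irreducible parts descended separately), so $f_\ast=W$ has degree $w$, whereas every $g\in S$ is monic of degree less than $w/2<w$; thus $f_\ast\nmid g$, a contradiction, so $e_1=0$. The only delicate point — which I expect to be the main obstacle — is the residual case where $W$ is a product of three or more irreducible divisors of $x^q h_1(x)-h_0(x)$, each of degree $<w/2$, for which the degree comparison alone does not close the argument; there I would instead observe that $f_\ast(x)\mid g(x)$ says $S$ contains an irreducible factor of $x^q h_1(x)-h_0(x)$, hence a zero divisor of $R$, so the subsequent descent of this $g$ meets exactly the obstruction already seen for the bad linear factors $\zeta+\alpha_z$, and a relation of that shape carries no usable information. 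Thus, among the relations (\ref{Dmrelation}) that actually feed the descent, $e_1$ is always $0$.
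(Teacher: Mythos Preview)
Your argument is correct and is essentially the same as the paper's: both observe that if $e_1=1$ then $W(x)$ divides the left-hand side of (\ref{Dmrelation}) as a polynomial, and since $W(x)\mid x^qh_1(x)-h_0(x)$ the congruence forces $W(x)$ (or your $f_\ast$) to divide the right-hand side, contradicting the degree bound on the elements of $S$. The paper compresses this into the phrase ``$W(x)$ is a zero divisor, so it must appear on the right-hand side,'' whereas you spell out the reduction to the residue field $K=\F_{q^2}[x]/(f_\ast)$; the content is identical. Your closing paragraph on reducible $W$ with three or more irreducible factors is unnecessary here: in the descent $W$ is taken irreducible (as the paper does explicitly in the next section), and indeed the paper's own one-line proof silently uses this when it concludes that $W$ itself, rather than merely a factor of it, appears on the right.
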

In other words, if $W(x)$ is a factor of $x^q h_1(x) - h_0(x)  $,
then it will  never appear in the left-hand side of
(\ref{Dmrelation}) as a factor. So the 
descent for $ W(\zeta) $ is not possible.

\begin{proof}
The polynomial
$ W(x) $ is a zero divisor 
in the ring
$ \F_{q^2}[x] / (x^q h_1 (x) - h_0 (x))$.
Hence if     $ W(x) $ 
appears in the left-hand side of (\ref{Dmrelation}) as a factor,
it will also appear in the right-hand side. This contradicts 
the requirement that the factors in the right-hand
side  have degrees smaller than the degree of $ W(x) $. 
\end{proof}

Note that the trap factor $ W(\zeta) $ can appear in the descent paths
of other elements, which essentially blocks the descents.
It is especially troublesome if
 $x^q h_1 (x) - h_0 (x)$ has many small degree factors.


\section{ The trap-avoiding descent  }

Now we have discovered traps for the original QPA-descent. 
How can we work around them? From the above discussion, 
we assume that we work in a non-Kummer extension,
and the polynomial $ x^q h_1 (x) - h_0 (x)  $ 
with the factorization as (\ref{ringpolfac}) satisfies
\begin{itemize}
  \item $ \deg(h_0) \leq 2, \deg(h_1)\leq 1 $;
  \item $ k_i > 1 $ for all $ 1\leq i\leq l $; In other words, 
it is free of linear factors; 
  \item $ \gcd(k, k_i) =1 $  for all $ 1\leq i\leq l $. 
\end{itemize}
In the most interesting case when $ k $ is a prime, our numerical
data show that the above requirements can be easily satisfied.

\begin{heuristics}\label{Hh0h1}
  Let $ q $ be a prime power and $ k < q $ be a prime. 
Then there exist polynomials $ h_0 $ and $ h_1 $ satisfying
the above requirements.
\end{heuristics}

Assume that the  discrete logarithms of all linear polynomials
have been computed. Suppose that we need to compute the discrete logarithm
of $ W(\zeta) $, where  $ W(x) $ is an irreducible polynomial of
degree less than $ k $, and it is relatively prime to $f(x)$. 
If $ W(x) |  x^q h_1(x) - h_0 (x)$,
we will search for an integer $ i $   
such that $ W(x)^i \pmod{ f(x)} $ is relatively prime to $ x^q h_1(x) - 
h_0 (x) $. Such $ i $ can be found easily by a random process.


Now we can  assume that $ \gcd(W(x), x^q h_1(x) - h_0 (x)) =1 $.
If there are not many traps,  we will 
use a trap-avoiding strategy for the descent.
The basic idea is simple.
Whenever we find a relation (\ref{Dmrelation}),
we will not use it unless the right-hand side is
relatively prime to $ x^q h_1(x) - 
h_0 (x) $. 
\begin{definition}
Define the trap-avoiding descent 
lattice $ \L(W) $  associated with $ W(x) $ to be
generated by
\[ \{ \v_m | N_{m,W} \ is \ w/2-smooth, \ and\ 
\gcd(N_{m,W}, x^qh_1 (x)-h_0(x))=1 \}. \]
\end{definition}
Note that  we  use less
relations than \cite{BGJT13} does, since we have to avoid traps. 
If the vector  $(1,0,\ldots,0)$ is in the 
trap-avoiding descent lattice of $ W(x) $,
then $ W(x) $  can be written as a product
of low degree polynomials in  $\F_{q^2}[x]/(f(x))$ that are not traps. 
We believe that the following heuristics is very likely to be true. 
\begin{heuristics}\label{Hdescent}
The trap-avoiding descent lattice for $ W(x) $ 
contains the vector $(1,0,\ldots,0)$ if 
$ \gcd(W(x), x^q h_1(x) - h_0 (x)) =1 $.
\end{heuristics}

To provide a theoretical evidence,  we will show 
that $(1,0,\ldots,0)$ is in its super lattice
that is generated by $ \v_m $ for all $ m\in \P_q $, regardless
whether $ N_{m,W} (x) $  is $w/2$-smooth or not.
This is a slight improvement over \cite{BGJT13},
where it is proved that 
$(q^3-q, 0, \ldots, 0)$ is in the super lattice.
To proceed, we first make some definitions following 
\cite{BGJT13}.
There are two matrices in consideration.
The matrix $\H$ is composed by the binary row vectors $\v_m $ for all 
$  m = \left( \begin{array}{cc} a & b  \\ c & d \end{array} \right) \in \P_q$.
It is a matrix of $q^3+q$ rows and $q^2$ columns.
If we view $ m^{-1} $ as a map from 
$ \Proj^1 (\F_q) $ to $ \Proj^1 (\F_{q^2}) $ 
given by \[ (\beta_1 : \beta_2) \rightarrow ( -d \beta_1 + b \beta_2 : 
c \beta_1 - a \beta_2), \] 
then the $ i $-th component of $ v_m $  is $ 1 $ iff there is a point 
$ P \in \Proj^1 (\F_q) $ such that $ m^{-1}(P) = (\alpha_i : 1) $. 
We define a binary vector $\v_m^{+}=(e_1,\dotsc,e_{q^2},e_{q^2+1})$
for $  m \in \P_q $, where $ (e_1,\dotsc,e_{q^2}) = \v_m, $   
and 
\[ e_{q^2+1}= \left\{ 
 \begin{array}{ll}
  1 & \text{if } (a:c) \in \Proj^1 (\F_q) \\
  0 & \text{otherwise.}
\end{array}
\right.
\]
One can verify that the last component of 
$\v_m^{+}$ corresponds to whether there is a point 
$ P \in \Proj^1 (\F_q) $ such that $ m^{-1}(P) = (1:0) = \infty $.
The matrix $\H^{+}$ is composed by the vectors $\v_m^{+}, m\in \P_q$.
$\H^{+}$ is a matrix of $q^3+q$ rows and $q^2+1$ columns. All the row vectors
have exactly $ q+1 $ many coordinates which are $ 1 $'s.

Denote the lattices generated by the row vectors of $\H$ and $\H^{+}$ by $\L(\H)$ 
and $\L(\H^{+})$ respectively. 
In \cite{BGJT13}, the authors showed that $\v_1=(q^2+q,\ldots,q^2+q)\in \L(\H^{+})$ 
and $\v_2=(q^2+q,q+1,\ldots,q+1)\in \L(\H^{+})$.

\begin{theorem}
The vector $(1, 0, \dotsc, 0) $ is in the lattice $\L(\H)$.
\end{theorem}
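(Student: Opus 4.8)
The plan is to leverage the two vectors already produced in \cite{BGJT13}, namely $\v_1=(q^2+q,\ldots,q^2+q)\in\L(\H^{+})$ and $\v_2=(q^2+q,q+1,\ldots,q+1)\in\L(\H^{+})$, together with a more careful accounting of the symmetry that $\P_q$ carries. First I would record the elementary observation that the column sums of $\H^{+}$ are all equal: since each of the $q^3+q$ rows has exactly $q+1$ ones, and the ambient symmetry group $PGL_2(\F_{q^2})$ acts transitively on the $q^2+1$ points of $\Proj^1(\F_{q^2})$ (equivalently, on the $q^2+1$ columns of $\H^{+}$), every column of $\H^{+}$ has the same sum, namely $(q^3+q)(q+1)/(q^2+1)=q(q+1)$. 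Hence the all-ones-weighted sum of the rows of $\H^{+}$ is exactly $\v_1=(q^2+q)(1,\ldots,1)$, recovering the first BGJT vector, and more importantly telling us that $q(q+1)\cdot\mathbf{1}_{q^2+1}\in\L(\H^{+})$.

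**Next** I would project away the last coordinate: deleting the final column of $\H^{+}$ turns $\v_1$ into $(q^2+q)(1,\ldots,1)\in\L(\H)$ (length $q^2$) and turns $\v_2$ into $(q^2+q, q+1,\ldots,q+1)\in\L(\H)$ as well, since $\L(\H)$ is by definition the image of $\L(\H^{+})$ under this coordinate projection. Subtracting, $\v_1-\v_2=(0,\,q^2-1,\,q^2-1,\ldots,q^2-1)\in\L(\H)$, so $(q^2-1)$ times the vector $(0,1,1,\ldots,1)$ lies in $\L(\H)$; and we already have $(q^2+q)(1,1,\ldots,1)\in\L(\H)$. The subgroup of $\Z^{q^2}$ generated by these two vectors together with all coordinate permutations coming from the $PGL_2(\F_{q^2})$-symmetry of $\H$ is what I want to show contains $(1,0,\ldots,0)$. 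The key number-theoretic input is that $\gcd(q^2+q,\;q^2-1)=\gcd(q(q+1),(q-1)(q+1))=(q+1)\gcd(q,q-1)=q+1$, while a second combination — e.g. applying a Möbius transformation that moves the distinguished coordinate and subtracting — should produce a vector congruent to a single standard basis vector modulo a quantity coprime to $q+1$, so that a Bézout combination collapses everything to $(1,0,\ldots,0)$.

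**Concretely**, the mechanism I expect to use is this: pick a matrix $m\in\P_q$ whose associated row $\v_m$ has a $1$ in position $1$ (i.e. $0\in\Proj^1(\F_q)$ maps into the support); such rows have weight $q+1$, and by averaging $\v_m$ over a suitable sub-orbit one gets a vector of the form $(c_0,c_1,\ldots,c_{q^2})$ with $c_0$ differing from the other entries in a controlled way. Combining with the uniform vector $(q^2+q)(1,\ldots,1)$ and the vector $(0,q^2-1,\ldots,q^2-1)$, and using that $\gcd(q+1,\;\text{something like }q-1\text{ or }q)$ can be driven down — indeed, since the stabilizer structure forces the gcd of all achievable "coefficient gaps at coordinate $1$" to divide quantities whose overall gcd with $q+1$ is $1$ — one isolates $(1,0,\ldots,0)$. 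This is exactly where the improvement over \cite{BGJT13} comes from: they were content to land on $(q^3-q,0,\ldots,0)$, i.e. they did not push the gcd all the way to $1$.

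**The main obstacle** will be the last step: proving that the relevant gcd actually equals $1$ and not merely a proper divisor of $q^3-q=(q-1)q(q+1)$. This requires exhibiting enough genuinely different rows of $\H$ (not just symmetric images of a single one, since permutations alone cannot change a gcd) to witness that the coordinate-$1$ entry ranges over a set of integers whose gcd is $1$. I would do this by choosing two or three explicit families of matrices in $\P_q$ — for instance those corresponding to $x\mapsto x$, $x\mapsto 1/x$, and $x\mapsto x+\beta$ for $\beta\in\F_{q^2}\setminus\F_q$ — writing down the resulting rows of $\H$, forming a small explicit integer combination, and checking directly that the coordinate-$1$ entry of the combination is $\pm1$ while all other entries are multiples of it. The bookkeeping of which of the $q^2$ coordinates receives a $1$ under $m^{-1}:\Proj^1(\F_q)\to\Proj^1(\F_{q^2})$ is where the care is needed, but it is a finite, explicit check rather than a genuinely deep step.
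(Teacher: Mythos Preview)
Your proposal is a plan, not a proof: the decisive step---actually exhibiting explicit rows and an integer combination yielding $(1,0,\ldots,0)$---is deferred to ``a finite, explicit check'' that you never carry out. That check \emph{is} the content of the theorem. What you have written down so far (the projections of $\v_1,\v_2$ and the computation $\gcd(q^2+q,\,q^2-1)=q+1$) still leaves a factor of $q+1$ unexplained, i.e.\ you have not improved on the BGJT bound $(q^3-q,0,\ldots,0)\in\L(\H)$. The families you propose for the final step are also not quite right: $x\mapsto x$ and $x\mapsto 1/x$ lie in $PGL_2(\F_q)$ itself and hence represent the \emph{same} coset in $\P_q$, so they contribute no new row.

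The paper supplies precisely the two missing ingredients, and---crucially---works in $\L(\H^{+})$ throughout rather than projecting early. First, summing $\v_{m_\beta}^{+}$ over the $q$ translation matrices $m_\beta=\left(\begin{smallmatrix}1&\beta\gamma\\0&1\end{smallmatrix}\right)$, $\beta\in\F_q$ (where $\F_{q^2}=\F_q[\gamma]$), gives $\v_3=(1,\ldots,1,q)\in\L(\H^{+})$: each finite point of $\Proj^1(\F_{q^2})$ is hit once, and $\infty$ is hit $q$ times. Second, there are exactly $q+1$ rows of $\H^{+}$ with a $1$ in both the first and last columns; by sharp $3$-transitivity their remaining $1$'s cover $\Proj^1(\F_{q^2})\setminus\{0,\infty\}$ once each, so their sum is $\v_4=(q+1,1,\ldots,1,q+1)\in\L(\H^{+})$. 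Then
\[
\v_5=\v_2-(q+1)\v_3=(q^2-1,0,\ldots,0,1-q^2),\qquad \v_6=\v_4-\v_3=(q,0,\ldots,0,1),
\]
and $q\v_6-\v_5=(1,0,\ldots,0,q^2+q-1)\in\L(\H^{+})$; dropping the last coordinate gives the result.

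The point is that your early projection to $\L(\H)$ discards exactly the workspace that makes the argument clean: the extra $(q^2+1)$-st coordinate absorbs the residual terms so that pure row operations in $\L(\H^{+})$ isolate the first coordinate, with no gcd/B\'ezout or permutation argument needed at all. Your instinct to use translations is correct (that is $\v_3$), but you are missing the second construction $\v_4$, and you would need to do the combination in $\L(\H^{+})$ before projecting.
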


\begin{proof} Fix a $ \gamma$ such that   $ \F_{q^2} = \F_q[ \gamma] $. 
Firstly, observe that $\v_3=(1,\ldots,1,q)\in \L(\H^{+})$. 
This follows from $\v_3=\sum_{\beta\in\F_q}\v_{m_\beta}\in \L(\H^{+}),$ 
where $m_\beta=\left( \begin{array}{cc} 1 & \beta\gamma \\ 0 & 1 \end{array} \right) \in \P_q$.
There are $q+1$  row vectors in $\H^{+}$   
such that both the first and the last coordinates are $1$.
Since the projective linear map
on a projective line is sharply 3-transitive, 
a third  coordinate with value $ 1 $  will uniquely determine the coset
in $\P_q $. Thus the sum of these $q+1$ vectors is 
$\v_4=(q+1,1,\ldots,1,q+1)\in \L(\H^{+})$.

From the above observations, we have 
\[
\v_5 = \v_2-(q+1)\v_3= (q^2-1,0,\dotsc,0,1-q^2)\in \L(\H^{+}),
\]
\[
\v_6 = \v_4 - \v_3 = (q,0,\dotsc,0,1)\in \L(\H^{+}).
\]
We deduce 
\[
\v_7 = q\v_6 - \v_5 = (1,0,\dotsc,0,q^2+q-1) \in \L(\H^{+}),
\]
which implies $(1,0,\dotsc,0)\in \L(\H)$. 
\end{proof}

\section{Concluding Remarks and Open problems}

In this paper, we study the validation of the heuristics made 
in the quasi-polynomial time algorithm solving the discrete logarithms
in the small characteristic fields \cite{BGJT13}.  We find that
the heuristics are problematic in the cases of non-Kummer extensions. 
We propose a few modifications to the algorithm, including
some extra requirements for the polynomials $ h_0 $ and $ h_1 $,
and a trap-avoiding descent strategy.  The modified algorithm 
relies on three improved heuristics. 
\begin{proposition}
  If  Heuristics \ref{HSmith}, \ref{Hh0h1} and \ref{Hdescent} hold, then 
the discrete logarithm problem 
over $ \F_{q^k} $ ($ k<q $)  can be solved in time $ q^{O(\log (k))}  $.
\end{proposition}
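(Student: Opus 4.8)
The plan is to assemble the complexity bound by tracing the algorithm through its two phases --- computing the discrete logarithms of the factor base (the linear polynomials) and the trap-avoiding QPA-descent --- and checking that each phase terminates in quasi-polynomial time under the respective heuristic. First I would recall the embedding reduction already spelled out in the excerpt: solving discrete logarithms over $\F_{p^k}$ with $p<k$ reduces, at the cost of a polynomial factor, to solving them over $\F_{q^{2k}}$ with $q=p^{\ceil{\log_p k}}$ and $k<q$; so it suffices to bound the cost over $\F_{q^{2k}}$. By Heuristics~\ref{Hh0h1}, when $k$ is prime one can find in time polynomial in $q$ (by brute-force search over the constantly many coefficients of $h_0,h_1$, testing the factorization pattern of $x^qh_1(x)-h_0(x)$) a pair $(h_0,h_1)$ with $\deg h_0\le 2$, $\deg h_1\le 1$, no linear factors, and $\gcd(k,k_i)=1$ for all $i$.

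Next I would handle the factor-base phase. The relation-generation step runs over the $q^3+q$ cosets in $\P_q$, for each computing $N_{m}(x)$ and testing whether it splits into linear factors over $\F_{q^2}$; this is $q^{O(1)}$ arithmetic. The resulting vectors generate $\hat{\L_1}$ (after adjoining the explicit vectors $(q^2-1,0,\dots)$ and $(0,\dots,q^{2k}-1,\dots,0)$), a lattice in $\Z^{q^2+1}$, and computing its Smith Normal Form is $q^{O(1)}$ integer operations. Under Heuristics~\ref{HSmith} the nontrivial invariant factors other than $q^{2k}-1$ are the $s_i\mid q^2-1$; for each of the $t$ of them we extract a generator as a product of linear polynomials, evaluate it modulo $f(x)$ to land in $\F_{q^2}^*$, take a discrete log there (which is easy, as $q^2-1$ is smooth-enough or at least the group is tiny relative to $q^{2k}-1$ --- in any case polynomial in $q$), and adjoin the resulting $t\le q^2$ correction relations. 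This produces $\L_1$, a generator of $(\F_{q^2}[x]/(f(x)))^*$, and hence $\log(\zeta+\alpha_i)$ for all $\alpha_i\in\F_{q^2}$ --- all within $q^{O(1)}$ time, so certainly $q^{O(\log k)}$.

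The third and substantive step is the descent. Given a target $W(\zeta)$ with $W$ irreducible of degree $w$, $1<w<k$, one may by a short randomized step (squaring modulo $f$) arrange $\gcd(W,x^qh_1(x)-h_0(x))=1$. Then one samples $m\in\P_q$, forms $N_{m,W}(x)$ of degree $O(w)$, and keeps the relation only when $N_{m,W}$ is $w/2$-smooth and coprime to $x^qh_1(x)-h_0(x)$. The probability that a degree-$O(w)$ polynomial over $\F_{q^2}$ is $w/2$-smooth is bounded below by a positive constant (it is $(n/m)^{-n/m}$ with $n/m=O(1)$), and discarding the $O(1)$-many traps costs only a constant-factor loss in the success probability; so after $q^{O(1)}$ trials one collects enough relations that, by Heuristics~\ref{Hdescent}, $(1,0,\dots,0)$ lies in the trap-avoiding descent lattice $\L(W)$. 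Solving the associated linear system expresses $W(\zeta)$ (up to a power of $\lambda$ and known linear factors) as a product of at most $3w$ polynomials each of degree $\le w/2$, all coprime to $x^qh_1(x)-h_0(x)$. Recursing, the degree halves at each level, so the recursion has depth $O(\log k)$ and branching at most $3w=O(k)$, giving a descent tree of size $k^{O(\log k)}=q^{O(\log k)}$ nodes, each processed in $q^{O(1)}$ time; the leaves are linear polynomials whose logarithms are already known. Multiplying the per-node cost by the tree size yields the overall $q^{O(\log k)}$ bound, and translating back through the embedding gives $q^{O(\log k)}$ for $\F_{q^k}$.

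The main obstacle I expect is not any single estimate but the interface with Heuristics~\ref{Hdescent}: one must argue that excluding the traps does not destroy the reachability of $(1,0,\dots,0)$ \emph{at every level of the recursion}, i.e. that the children produced at one level are themselves amenable to trap-avoiding descent. The theorem just proved (that $(1,0,\dots,0)\in\L(\H)$ in the full, non-smooth lattice) is only unconditional evidence for the top of this tower; pushing it down through $O(\log k)$ levels, while simultaneously controlling that the constantly-many trap polynomials do not accumulate or conspire to block a positive fraction of useful relations, is exactly what Heuristics~\ref{Hdescent} is asked to supply, and the proof of the proposition consists precisely in recording that this is assumed and that, granted it, the counting above closes. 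A secondary technical point is verifying that the $s_i$-correction discrete logarithms in $\F_{q^2}^*$ and the bookkeeping of known logarithms never force a subexponential detour --- this is routine but must be stated.
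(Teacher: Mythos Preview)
The paper states this proposition without a separate proof; it is offered as a summary of the algorithm developed in the preceding sections, so your task was really to flesh out what the paper leaves implicit. Your outline does this correctly: the embedding reduction, the use of Heuristics~\ref{Hh0h1} to find $(h_0,h_1)$, the factor-base phase under Heuristics~\ref{HSmith} (including the $s_i\mid q^2-1$ correction step), and the recursive trap-avoiding descent under Heuristics~\ref{Hdescent} are all invoked in the right places, and the final $q^{O(\log k)}$ bound is right.

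There is one slip in your branching count. A \emph{single} relation~(\ref{Dmrelation}) has at most $3w$ low-degree factors on its right-hand side, but to isolate $W(\zeta)$ you must take an integer linear combination of roughly $q^2$ such relations (this is what ``$(1,0,\dots,0)\in\L(W)$'' means). The resulting expression for $W(\zeta)$ is then a product of $q^{O(1)}$ low-degree polynomials, not $3w$. Thus the per-node arity in the descent tree is $q^{O(1)}$ rather than $O(k)$. Since the depth is still $O(\log k)$, the tree has $(q^{O(1)})^{O(\log k)}=q^{O(\log k)}$ nodes and your final bound survives; but the sentence ``a product of at most $3w$ polynomials each of degree $\le w/2$'' should be corrected. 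A secondary remark: the discrete logarithm of the $s_i$-generators in $\F_{q^2}^*$ is polynomial in $q$ simply because the group has order $q^2-1$ (e.g.\ by baby-step/giant-step), not because $q^2-1$ is smooth.
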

We believe that proving 
(or disproving ) them 
are interesting open problems that help to understand the effectiveness
of the new algorithm.

\bibliographystyle{plain}
\bibliography{crypto}

\end{document}